\newtheorem{proposition}{Proposition}
\newtheorem{lemma}{Lemma}
\newtheorem{theorem}{Theorem}
\theoremstyle{definition}
\newtheorem{definition}{Definition}
\theoremstyle{remark}
\begin{document}

\date{}
\title[Rigid trinomial varieties]{Rigid trinomial varieties}
\author{Polina Evdokimova, Sergey Gaifullin  and Anton Shafarevich}

\address{Lomonosov Moscow State University, Faculty of Mechanics and Mathematics, Department of Higher Algebra, Leninskie Gory 1, Moscow, 119991, Russia}
\email{polina.evdokimova@math.msu.ru}

\address{Lomonosov Moscow State University, Faculty of Mechanics and Mathematics, Department of Higher Algebra, Leninskie Gory 1, Moscow, 119991, Russia; \linebreak
Moscow Center of Fundamental and Applied Mathematics,  Moscow, Russia; \linebreak and \linebreak
Faculty of Computer Science, HSE University, Pokrovsky Boulevard 11, Moscow, 109028 Russia}
\email{sgayf@yandex.ru}

\address{Moscow Center of Fundamental and Applied Mathematics,  Moscow, Russia; \linebreak and \linebreak
Faculty of Computer Science, HSE University, Pokrovsky Boulevard 11, Moscow, 109028 Russia}
\email{shafarevich.a@gmail.com}

\thanks{The work was supported by the Foundation for the Advancement of Theoretical Physics and Mathematics “BASIS”.}
\subjclass[2010]{Primary 14R20, 14J50; Secondary 13A50, 14L30}
\keywords{Affine variety, locally nilpotent derivation, graded algebra, torus action, trinomial}

\maketitle

\begin{abstract}
An algebraic variety $X$ is called rigid if there is no non-trivial action on $X$ of the additive group of the base field. A trinomial variety is an affine variety that is given by a set of equations consisting of polynomials with three monomials; see Definition \ref{defTrin}. In this paper, we complete the classification of rigid trinomial varieties started in \cite{IA} and \cite{SG}. 

\end{abstract}

\section{Introduction}

Let $\mathbb{K}$ be an algebraically closed field of characteristic zero, $\mathbb{G}_a$ be the additive group of the field $\mathbb{K}$, and $X$ be an affine irreducible algebraic variety over $\mathbb{K}$. It is always natural to study actions of algebraic groups on $X$. In particular, one can consider $\mathbb{G}_a$-actions on $X$. 

There is a way to study $\mathbb{G}_a$-actions algebraically using the theory of locally nilpotent derivations. Let $R$ be a commutative algebra over $\mathbb{K}$. A linear map $\partial:R \to R$ is called a \emph{derivation} if it satisfies the Leibniz rule: $\partial(ab) = a\partial(b) + b\partial(a)$ for all $a,b \in R$. We say that a derivation $\partial$ on $R$ is \emph{locally nilpotent} if for any $a\in R$ there is a positive integer $n$ such that $\partial^n(a) = 0$. We denote by $\mathrm{LND}(R)$ the set of all locally nilpotent derivations on $R$. If $X$ is an affine variety and $\mathbb{K}[X]$ is an algebra of regular functions on $X$ then by a locally nilpotent derivation on $X$ we mean a locally nilpotent derivation on $\mathbb{K}[X]$ and by $\mathrm{LND}(X)$ we denote the set of locally nilpotent derivations on $\mathbb{K}[X]$. 

The theory of locally nilpotent derivations is well studied; see \cite{GF}. A locally nilpotent derivation $\partial$ on $X$ defines a $\mathbb{G}_a$-action on $X$ as follows. An element $t \in \mathbb{G}_a$ acts on $\mathbb{K}[X]$ by the following rule:

$$t\circ f = \mathrm{exp}(t\partial)(f) = \sum_{i = 0}^{\infty} \frac{t^i\partial^i(f)}{i!}.$$
Since $\partial$ is locally nilpotent, the last sum has only a finite number of non-zero terms. This map establishes a bijection between locally nilpotent derivations on $X$ and $\mathrm{G}_a$-actions on $X$; see \cite[Section 1.5]{GF}.

We say that a variety $X$ is \emph{rigid} if there is no non-trivial $\mathbb{G}_a$-action on $X$, or equivalently, $\mathrm{LND}(X) = \{0\}$. Examples of rigid varieties can be found in \cite{CM, KZ, CPW} and \cite[Chapter 10]{GF}. It was proven in \cite{AG} that the automorphism group of a rigid variety contains a unique maximal algebraic torus. In some cases, this allows to describe all automorphisms of rigid varieties, which is usually difficult to do for affine varieties.  One can find examples in \cite{AG, PZ, ST}. 

In this paper, we study \emph{trinomial varieties}. These are varieties given by systems of polynomial equations of the form

$$c_0T_{01}^{l_{01}}\ldots T_{0n_0}^{l_{0n_0}} + c_1T_{11}^{l_{11}}\ldots T_{1n_1}^{l_{1n_1}} + c_2T_{21}^{l_{21}}\ldots T_{2n_2}^{l_{2n_2}} = 0, $$
where $T_{ij}$ are variables, $c_i \in \mathbb{K}\setminus\{0\}$, $n_0\geq 0, n_1, n_2 \geq 1$, and $l_{ij}$ are non-negative integers.  The coefficients $c_i$ must satisfy certain conditions; see Definition \ref{defTrin} or \cite[Construction 1.1]{HW} for a precise definition. 

Every trinomial variety admits an action of an algebraic torus of complexity one. This means that there is an action of a torus such that a generic orbit has codimension 1. Moreover, every normal, rational variety $X$ with only constant invertible functions, finitely generated divisor class group and an algebraic torus action of complexity one can be obtained as a quotient of a trinomial variety via an action of a diagonalizable group; see \cite[Corollary 1.9]{HW}.

In \cite{CPW}, it was proven that the Pham-Brieskorn surface
$$x_0^{k} + x_1^l + x_2^m = 0,\ m \geq l \geq k \geq 2,$$
which is a trinomial hypersurface, is not rigid if and only if $k = l = 2$. In \cite{IA} a criterion of rigidity of a factorial trinomial hypersurface was obtained. In \cite{SG} this result was extended to arbitrary trinomial hypersurfaces, factorial trinomial varieties and all trinomial varieties of Type 1. In this paper, we give a criterion of rigidity of an arbitrary trinomial variety of Type 2 (Theorem \ref{Type2}). This implies a criterion of rigidity of an arbitrary trinomial variety (Theorem \ref{MainTheorem}).

\section{Trinomial varieties}

Here we recall the definition of trinomial varieties.

\begin{definition}\label{defTrin}\cite[Construction 1.1]{HW}

Fix integers $r, n > 0, m \geq 0$ and $q \in \{0, 1\}$. Also, fix a partition 
$$n = n_q + \ldots + n_r, \ n_i >0.$$
For each $i = q, \ldots, r$, fix a tuple $l_i = (l_{i1}, \ldots, l_{in_i})$ of positive integers and define a monomial
$$T_i^{l_i} = T_{i1}^{l_{i1}}\ldots T_{in_i}^{l_{in_i}} \in \mathbb{K}[T_{ij}, S_k | q \leq i \leq r, 1 \leq j \leq n_i, 1 \leq k \leq m].$$

We write $\mathbb{K}[T_{ij}, S_k]$ for the above polynomial ring. Now we define a ring $R(A)$ for some input data $A$. 

\emph{Type 1.} $q = 1, A = (a_1, \ldots, a_r)$ where $a_j \in \mathbb{K}$ with $a_i \neq a_j$ for $i\neq j$. Set $I = \{1, \ldots, r-1\}$ and for every $i \in I$ define a polynomial
$$g_i = T_i^{l_i} - T_{i+1}^{l_{i+1}} - (a_{i+1} - a_i) \in \mathbb{K}[T_{ij}, S_k].$$

\emph{Type 2.} $q = 0,$

$$A = \begin{pmatrix} 

a_{00} & a_{01} & a_{02} & \ldots & a_{0r}\\
a_{10} & a_{11} & a_{12} & \ldots & a_{1r}

\end{pmatrix}$$
is a $2 \times (r+1)$-matrix with pairwise linearly independent columns. Set $I = \{0, \ldots, r-2\}$ and for every $i \in I$ define a polynomial 
$$g_i = \mathrm{det}\begin{pmatrix} 
T_{i}^{l_i} & T_{i+1}^{l_{i+1}} & T_{i+2}^{l_{i+2}} \\
a_{0i} & a_{0i+1} & a_{0i+2} \\
a_{1i} & a_{1i+1} & a_{1i+2}
\end{pmatrix} \in \mathbb{K}[T_{ij}, S_k].$$

For both types we define $R(A) = \mathbb{K}[T_{ij}, S_k]/(g_i \mid i \in I).$

\end{definition}

If $m>0$ then trinomial varieties of Type 1 and Type 2 are not rigid. Indeed, we have a non-zero locally nilpotent derivation $\frac{\partial}{\partial S_1}$ on $R(A)$. 

The following criterion for rigidity of trinomial varieties of Type 1 was obtained in \cite{SG}.

\begin{theorem}\cite[Theorem 3]{SG}\label{Type1} Let $X$ be a trinomial variety of Type 1. Then $X$ is not rigid if and only if one of the following holds:
\begin{enumerate}
\item $m > 0$;
\item There is $b \in \{1,\ldots, r\}$ such that for each $ i\in \{1, \ldots, r\} \setminus \{b\}$ there is $j(i) \in \{1, \ldots, n_i\}$ with $l_{ij(i)} = 1$.

\end{enumerate}

\end{theorem}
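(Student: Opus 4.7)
My plan is to prove both implications separately.

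For the ``if'' direction I must exhibit a nonzero locally nilpotent derivation whenever (1) or (2) holds. When $m>0$, the derivation $\partial/\partial S_1$ is locally nilpotent and nonzero on $R(A)$, since $S_1$ does not occur in any $g_i$. It remains to treat the case $m=0$ under condition (2): there is $b\in\{1,\dots,r\}$ and, for every $i\neq b$, an index $j(i)$ with $l_{i,j(i)}=1$. Here I would construct an explicit LND $\delta$ that vanishes on every $T_{ik}$ with $i\neq b$ and $k\neq j(i)$, and on all but (possibly) one variable of the $b$-block. Writing $T_i^{\hat l_i}$ for $T_i^{l_i}/T_{i,j(i)}$, the relation $\delta(g_i)=0$ reduces to $T_i^{\hat l_i}\,\delta(T_{i,j(i)})=T_{i+1}^{\hat l_{i+1}}\,\delta(T_{i+1,j(i+1)})$, and a consistent polynomial solution is obtained by taking each $\delta(T_{i,j(i)})$ to be a suitable monomial in the killed variables, glued together along the chain. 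Local nilpotency then follows because every nonzero image $\delta(T_{ij})$ lies in the subring generated by variables on which $\delta$ vanishes.

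For the ``only if'' direction I assume $m=0$, that no $b$ as in (2) exists, and that $\partial\neq 0$ is an LND on $R(A)$; I aim for a contradiction. The algebra $R(A)$ carries a natural grading by the character lattice of the complexity-one torus on $X$ from \cite{HW}. Extracting the extremal-weight component of $\partial$ with respect to a compatible pre-order on the weight lattice lets me assume $\partial$ is homogeneous. Applying $\partial$ to each $g_i$ yields an element of the ideal $(g_1,\dots,g_{r-1})$; using the trinomial shape of $g_i$ together with homogeneity, the possible supports of the images $\partial(T_{ij})$ are severely restricted. A combinatorial tracking along the chain of equations then shows that the only configuration compatible with the LND property is precisely condition (2), with some distinguished block $b$, contradicting the assumption.

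The main obstacle is this last step: proving that the constraints forced by $\partial(g_i)\in(g_1,\dots,g_{r-1})$ propagate coherently along all $r-1$ equations, and that only a \emph{single} block can play the exceptional role of $b$. I would organize this by tracking the torus weights of the monomials $T_i^{l_i}$, which must all coincide by homogeneity of the $g_i$; this coincidence rigidly pins down the shape of $\partial$ and matches it with (2). Working out this combinatorial rigidity carefully is the delicate part of the argument.
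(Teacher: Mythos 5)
First, note that the paper you are working from does not prove this statement at all: Theorem \ref{Type1} is imported verbatim from \cite[Theorem 3]{SG} and used as a black box, so there is no in-paper proof to compare against. Judged on its own merits, your proposal has a genuine gap in the hard direction. Your ``only if'' argument is a plan, not a proof: the entire content of that implication is the claim that homogenizing $\partial$ with respect to the torus weights and tracking $\partial(g_i)\in(g_1,\dots,g_{r-1})$ along the chain forces condition (2), and you explicitly defer exactly that step (``working out this combinatorial rigidity carefully is the delicate part''). Nothing in the sketch rules out, say, an LND whose extremal homogeneous component does not kill the $T_i^{l_i}$, nor explains why at most one block can fail to carry a unit exponent; these are precisely the points where the known proofs (in \cite{IA}, \cite{SG}, and in the Type~2 argument of this paper) need real machinery --- reduction via suspensions and \cite[Lemma 7]{SG}, the $ABC$-theorem, or the Freudenburg--Moser-Jauslin criterion. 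As it stands, the implication ``(1) and (2) both fail $\Rightarrow$ $X$ rigid'' is unproved.

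There is also a smaller but real error in the ``if'' direction. For Type 1 the relations force all $\delta(T_i^{l_i})$ to be equal to one common element $v$; if you killed the whole block $b$ you would get $v=0$ and hence $\delta=0$, so you must keep one variable $T_{b,k}$ alive, and then the natural choice $v=\bigl(\prod_{i\neq b}T_i^{\hat l_i}\bigr)\cdot T_b^{l_b}/T_{b,k}$ gives $\delta(T_{i,j(i)})=v/T_i^{\hat l_i}$, which contains $T_{b,k}^{\,l_{b,k}-1}$. Since $T_{b,k}$ is \emph{not} a killed variable (and condition (2) does not guarantee $l_{b,k}=1$), your stated justification of local nilpotency --- ``every nonzero image lies in the subring generated by variables on which $\delta$ vanishes'' --- is false. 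The derivation is still locally nilpotent, but because it is triangular in two stages ($\delta(T_{b,k})$ lands in the killed subalgebra $S$, while $\delta(T_{i,j(i)})$ lands in $S[T_{b,k}]$), and you need to say this. So the construction is salvageable, but the nilpotency argument as written does not work.
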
 

We denote $\mathfrak{l}_i = \mathrm{gcd}(l_{i1}, \ldots, l_{in_i}).$ We will need the following criterion of rationality of trinomial varieties of Type 2.

\begin{theorem}\cite[Corollary 5.8]{ABHW}\label{rational}
A trinomial variety $X$ is rational if and only if one of the following holds

\begin{enumerate}
\item $\mathrm{gcd}(\mathfrak{l}_i, \mathfrak{l}_j) = 1$ for all $i, j \in \{0, \ldots, r\}$

\item There are $i, j \in \{0, \ldots, r\}$ such that $\mathrm{gcd}(\mathfrak{l}_i, \mathfrak{l}_j) > 1$ and for all pairs $(k, s) \neq (i, j)$ we have $\mathrm{gcd}(\mathfrak{l}_k, \mathfrak{l}_s) = 1$.

\item There are $i, j, k \in \{0,\ldots, r\}$ with $\mathrm{gcd}(\mathfrak{l}_i, \mathfrak{l}_j) = \mathrm{gcd}(\mathfrak{l}_i, \mathfrak{l}_k) = \mathrm{gcd}(\mathfrak{l}_j, \mathfrak{l}_k) = 2$ and $\mathrm{gcd}(\mathfrak{l}_u, \mathfrak{l}_v) = 1$ for all $u, v \in \{0, \ldots, r\}, \{u, v\} \nsubseteq \{i, j, k\}$.
\end{enumerate}
\end{theorem}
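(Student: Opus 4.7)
The plan is to exploit the complexity-one torus action built into Type~2 trinomial varieties, reduce rationality of $X$ to rationality of an explicit radical extension of $\mathbb{K}(\mu,\nu)$, and finish with a case analysis keyed to the gcd pattern of $(\mathfrak{l}_0,\ldots,\mathfrak{l}_r)$.

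First I would rewrite the equations $g_i=0$ parametrically. Since $A$ has rank two with pairwise linearly independent columns, the vanishing of the $g_i$ for $i\in\{0,\ldots,r-2\}$ is equivalent to the existence of a unique pair $(\mu,\nu)\in\mathbb{A}^2$ with
$$T_j^{l_j} \;=\; \mu a_{0j}+\nu a_{1j}, \qquad j=0,\ldots,r.$$
Setting $B_j:=\mu a_{0j}+\nu a_{1j}$ and $V_j:=\prod_k T_{jk}^{l_{jk}/\mathfrak{l}_j}$ (so that the exponents of $V_j$ are coprime), the monomial hypersurface $\{T_j^{l_j}=B_j\}\subset \mathbb{A}^{n_j}$ is birational to $\{V_j^{\mathfrak{l}_j}=B_j\}\times\mathbb{G}_m^{n_j-1}$ via a standard toric change of coordinates. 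Consequently $\mathbb{K}(X)$ is a purely transcendental extension of
$$L \;:=\; \mathbb{K}(\mu,\nu)(V_0,\ldots,V_r), \qquad V_j^{\mathfrak{l}_j}=B_j,$$
and $X$ is rational if and only if $L$ is.

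For the positive direction I would treat each case separately. In case~(1) the $\mathfrak{l}_j$ are pairwise coprime, so $L$ is a single cyclic Kummer extension of $\mathbb{K}(\mu,\nu)$ of degree $\prod_j \mathfrak{l}_j$ generated by $V=V_0\cdots V_r$; a Bezout identity among the exponents combined with a linear change of $\mu,\nu$ exhibits a purely transcendental presentation. In case~(2) the only shared factor turns $L$ into a cyclic cover of $\mathbb{P}^2$ ramified along two concurrent lines, and blowing up their intersection realises the cover as a Hirzebruch surface. In case~(3), after stripping off the pairwise-coprime parts, the essential subextension becomes $\mathbb{K}(\mu,\nu)(\sqrt{B_iB_j},\sqrt{B_iB_k})$; changing to the slope $t=\mu/\nu$ factorises this as a $(\mathbb{Z}/2)^2$-cover of $\mathbb{P}^1_t$ branched over three points, and Riemann--Hurwitz gives genus $0$, so $L$ is rational.

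For the converse, if $(\mathfrak{l}_0,\ldots,\mathfrak{l}_r)$ satisfies none of (1)--(3), then either a prime $p\ge 3$ divides two distinct pairwise gcds, or the prime $2$ divides at least four pairwise gcds. In either situation the analogous Kummer cover of $\mathbb{P}^2$ is ramified along at least four lines in general enough position to produce a nonzero unramified Brauer class (an Artin--Mumford-type obstruction) or, after restricting to the slope $t=\mu/\nu$, a hyperelliptic fibre of positive genus. The hard part will be this irrationality direction: the positive cases reduce to explicit birational maps once the quotient $\varphi\colon X\dashrightarrow\mathbb{A}^2$ and the radical substitutions are in place, but obstructing rationality outside (1)--(3) requires invariants sensitive to the combinatorics of the branch divisor, and checking that the obstruction vanishes \emph{exactly} in the three listed cases is the main technical hurdle.
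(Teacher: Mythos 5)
This statement is imported by the paper from \cite[Corollary 5.8]{ABHW}; the paper offers no proof of its own, so your attempt can only be judged on its internal merits. Your setup is sound and is essentially the standard one: the vanishing of the $g_i$ is equivalent to $T_j^{l_j}=\mu a_{0j}+\nu a_{1j}$ for a unique $(\mu,\nu)$, and since the exponents $l_{jk}/\mathfrak{l}_j$ are coprime, a monomial change of variables shows $\mathbb{K}(X)$ is purely transcendental over $L=\mathbb{K}(\mu,\nu)(V_0,\ldots,V_r)$ with $V_j^{\mathfrak{l}_j}=B_j$; a transcendence-degree count confirms this. The difficulty is everything after that. The decisive gap is the converse: you explicitly defer it, and the tools you propose (unramified Brauer classes, Artin--Mumford-type obstructions) are both far heavier than needed and not ones you show how to compute here, let alone show to vanish exactly in cases (1)--(3). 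There is also an error of detail in your case (2): the relevant cover is branched along \emph{all} $r+1$ concurrent lines $B_j=0$ (plus possibly the line at infinity), not along two, and the Galois group of $L/\mathbb{K}(\mu,\nu)$ is $\prod_j\mathbb{Z}/\mathfrak{l}_j$, which is not cyclic in general; the ``Hirzebruch surface'' claim is unsubstantiated. In cases (1)--(3) you must also prove that the \emph{whole} field $L$ is rational, not just an ``essential subextension''; stripping off the pairwise-coprime factors is exactly the Bezout/lattice computation you only gesture at.

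The missing organizing idea, which handles both directions at once, is to use the $\mathbb{G}_m$-action $t\cdot V_j=t^{N/\mathfrak{l}_j}V_j$ ($N=\mathrm{lcm}_j\,\mathfrak{l}_j$) on the reduced surface $S=\Spec L$. By Rosenlicht and Hilbert 90, $L\cong \mathbb{K}(C)(w)$ where $\mathbb{K}(C)=L^{\mathbb{G}_m}$, so $S$ is rational iff $C$ is; and if $S$ were rational then L\"uroth forces $g(C)=0$, which gives the converse for free. One computes $\mathbb{K}(C)=\mathbb{K}(t)\bigl(w_{ij}\bigr)$ with $t=B_1/B_0$ and $w_{ij}^{d_{ij}}=B_i/B_j$, $d_{ij}=\gcd(\mathfrak{l}_i,\mathfrak{l}_j)$, an abelian cover of $\mathbb{P}^1$ branched over the $r+1$ points $B_j=0$ with cyclic inertia of order $\mathrm{lcm}_i\,d_{ij}$ at the $j$-th point. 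Riemann--Hurwitz then shows $g(C)=0$ precisely in cases (1), (2) (a cyclic cover branched at two points) and (3) (the $(\mathbb{Z}/2)^2$-cover branched at three points), and $g(C)\geq 1$ otherwise --- e.g.\ for two disjoint pairs with gcds $d_1,d_2\geq 2$ one gets $2g-2=2(d_1-1)(d_2-1)-2\geq 0$. Note that your proposed cover $L/\mathbb{K}(\mu,\nu)$ restricted to a pencil line is strictly larger than $C$ and has the wrong genus, so the Riemann--Hurwitz count must be done on the quotient curve, not on the full Kummer extension. Without this (or an equivalent genus formula as in \cite{ABHW}), the proposal does not establish the ``only if'' direction and leaves the ``if'' direction incomplete outside case (1).
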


\section{Suspensions}\label{Susp}

As in \cite{SG}, one of the key ideas of the proof of Theorem \ref{MainTheorem} is that one can consider trinomial varieties as suspensions over trinomial varieties of lower dimension. 

\begin{definition}
Let $X$ be an affine variety and $m$ be a positive integer.  Given a non-constant regular function $f \in \mathbb{K}[X]$ and positive integers $k_1, \ldots, k_m$, we define a variety 
$$Y = \mathrm{Susp}(X, f, k_1, \ldots, k_m) = \mathbb{V}(y_1^{k_1}\ldots y_m^{k_m} - f(x)) \subseteq \mathbb{K}^m \times X,$$
called an \emph{m-suspension} over $X$ with weights $k_1, \ldots, k_m$. 
\end{definition}

One can consider the field $\mathbb{L}_i = \overline{\mathbb{K}(y_i)}$ that is the algebraic closure of $\mathbb{K}(y_i).$ We denote by $Y_i = Y(\mathbb{L}_i)$ the affine variety over $\mathbb{L}_i$ that is given by the same equations as $Y$. We will use the following lemma.

\begin{lemma}\cite[Lemma 7]{SG} Let $1 \leq i \neq j \leq m$. Assume $Y_i$ and $Y_j$ are rigid varieties. Then $Y$ is rigid.

\end{lemma}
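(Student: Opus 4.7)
The plan is to argue by contradiction: suppose $\partial$ is a nonzero locally nilpotent derivation on $\mathbb{K}[Y]$. I will then produce a nonzero locally nilpotent derivation on $\mathbb{L}_l[Y_l]$ for some $l \in \{i,j\}$, contradicting the rigidity hypothesis.

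The central technical tool is a \textbf{base change principle}: if $\delta$ is any nonzero $\mathbb{K}$-linear LND on $\mathbb{K}[Y]$ with $y_l \in \ker(\delta)$, then $\delta$ is automatically $\mathbb{K}[y_l]$-linear and extends uniquely to a $\mathbb{L}_l$-linear derivation $\widetilde{\delta}$ on $\mathbb{L}_l[Y_l] = \mathbb{L}_l \otimes_{\mathbb{K}[y_l]} \mathbb{K}[Y]$. This extension is locally nilpotent (local nilpotency is preserved by the flat extension $\mathbb{K}[y_l] \hookrightarrow \mathbb{L}_l$) and is nonzero, since it restricts to $\delta$ on $\mathbb{K}[Y]$. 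Hence any nonzero LND on $\mathbb{K}[Y]$ annihilating $y_i$ or $y_j$ would already yield the sought contradiction.

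So the problem reduces to a \textbf{replacement claim}: from the given nonzero $\partial$, I need to construct a (possibly different) nonzero LND $\delta$ on $\mathbb{K}[Y]$ with $y_i \in \ker(\delta)$ or $y_j \in \ker(\delta)$. If $\partial$ already annihilates $y_i$ or $y_j$, I just take $\delta = \partial$. Otherwise $n_i := \deg_\partial(y_i) \geq 1$ and $n_j := \deg_\partial(y_j) \geq 1$, and the top iterates $a_l := \partial^{n_l}(y_l)$ lie in $\ker(\partial) \setminus \{0\}$. Applying $\partial$ to the defining equation $y_1^{k_1}\cdots y_m^{k_m} = f(x)$ and extracting the highest non-vanishing iterate yields a nontrivial relation among the $a_l$ inside $\ker(\partial)$. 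Combining this relation with standard manipulations of the multiplicative $\partial$-degree function (in particular, the identity $\sum_l k_l \deg_\partial(y_l) = \deg_\partial(f(x))$ and the fact that two elements of the same $\partial$-degree with proportional leading iterates can be combined to produce an element of strictly smaller degree), one extracts a kernel element that encodes an algebraic dependence of $y_i$ on $\ker(\partial)$, and from this one builds the desired LND $\delta$ with $y_i \in \ker(\delta)$.

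\textbf{Main obstacle.} The replacement step is the technical heart of the argument: I have to verify that the iterative reduction really terminates in an element that translates into a nonzero LND annihilating one of $y_i, y_j$, rather than trivialising. This depends critically on the suspension structure --- the fact that $f(x)$ involves none of the $y_l$ --- which tightly constrains the admissible values of the $\deg_\partial(y_l)$ via $\sum_l k_l n_l = \deg_\partial(f(x))$. Once the replacement is in hand, invoking the base change principle on $\delta$ immediately closes the argument.
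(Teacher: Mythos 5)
This lemma is quoted from \cite[Lemma~7]{SG}; the present paper contains no proof of it, so your attempt has to be judged against the argument in that reference. Your first step --- the base change principle --- is correct and is indeed how the rigidity of $Y_i$ is exploited: a nonzero locally nilpotent derivation $\delta$ on $\mathbb{K}[Y]$ with $y_l\in\ker\delta$ is $\mathbb{K}[y_l]$-linear, survives localization at the kernel element $y_l$ and extension of scalars from $\mathbb{K}(y_l)$ to $\mathbb{L}_l$, and hence gives a nonzero element of $\mathrm{LND}(\mathbb{L}_l[Y_l])$. (A small quibble: flatness is not the relevant property here; what you need is that localization at a kernel element and base field extension both preserve local nilpotency.)

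The genuine gap is the ``replacement claim,'' which is exactly where the content of the lemma lies and which you only sketch. You assert that from an arbitrary nonzero $\partial$ one can manufacture a nonzero locally nilpotent derivation annihilating $y_i$ or $y_j$, but the proposed mechanism does not hold together. Extracting the top iterates $a_l=\partial^{n_l}(y_l)$ from the relation $y_1^{k_1}\cdots y_m^{k_m}=f(x)$ only gives the degree identity $\sum_l k_l\deg_\partial(y_l)=\deg_\partial(f)$, which by itself constrains nothing (all $\deg_\partial(y_l)$ could be positive, as happens for $\mathbb{V}(y_1y_2-x)\cong\mathbb{A}^2$ with $\partial=\partial_{y_1}+\partial_{y_2}$). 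The final move --- ``a kernel element encoding an algebraic dependence of $y_i$ on $\ker(\partial)$, from which one builds the desired $\delta$'' --- is not a construction at all; moreover, since the kernel of a locally nilpotent derivation is algebraically (indeed factorially) closed in the domain, $y_i$ being algebraic over $\ker(\partial)$ would already force $y_i\in\ker(\partial)$, so no new derivation would need to be ``built,'' which suggests the intended mechanism has not been thought through. Note also that your replacement claim, as stated, is a strictly stronger assertion than the lemma itself and is formulated without using the rigidity of $Y_i$ and $Y_j$ at all, so it would need an independent proof. In \cite{SG} the corresponding step is carried out by exploiting the quasitorus action rescaling the coordinates $y_1,\dots,y_m$ (replacing $\partial$ by a suitable homogeneous derivation) to force all but at most one of the $y_l$ into the kernel; some argument of this kind is indispensable, and it is missing from your proposal --- as you yourself concede in the ``main obstacle'' paragraph.
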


\section{Trinomial varieties of Type 2}

In this section, we prove the main theorem.

\begin{theorem}\label{Type2}

Let $X$ be a trinomial variety of Type 2. Then $X$ is not rigid if and only if one of the following holds:
\begin{enumerate}

\item $m > 0$;

\item There are at most two numbers $a, b \in \{0,\ldots, r\}$ such that for each $i \in \{0,\ldots, r\} \setminus \{a, b\}$ there is $j(i) \in \{1, \ldots, n_i\}$ with $l_{ij(i)} = 1$. 

\item There are exactly three numbers $a,b,c \in \{0, \ldots, r\}$ such that for each $i \in \{a, b\}$ there is $j(i) \in \{1, \ldots, n_i\}$ with $l_{ij(i)} = 2$ and the numbers $l_{ik}$ are even for all $k \in \{1,\ldots, n_i\}$. Moreover, for each $i \in \{0, \ldots, r\}\setminus\{a,b,c\}$ there is $j(i) \in \{1, \ldots, n_i\}$ with $l_{ij(i)} = 1$.
\end{enumerate}

\end{theorem}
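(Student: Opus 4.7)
The plan is to prove the two implications of Theorem \ref{Type2} separately, following the suspension framework of \cite{SG} and using Theorem \ref{Type1} and Lemma 1 as the principal tools. Throughout I would assume $m=0$, since the case $m>0$ is immediate via the locally nilpotent derivation $\partial/\partial S_1$.

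For the non-rigidity direction (sufficiency of (2) and (3)), I would construct explicit locally nilpotent derivations. In case (2), since every monomial $T_i^{l_i}$ with $i\notin\{a,b\}$ contains a linear factor $T_{i,j(i)}$, I can solve the $r-1$ determinantal relations $g_i=0$ iteratively for these linear variables, presenting $\mathbb{K}[X]$ as a polynomial extension of a subalgebra in which the $T_{a,k}$, $T_{b,k}$, and the remaining $T_{i,j}$ with $j\neq j(i)$ are free; an LND can then be written down explicitly, generalizing the construction used in \cite{SG} for Type 1. Case (3) is the genuinely new ingredient. The three all-even monomials let me extract squares and reduce the situation, after a Vieta-type change of variables, to a Pham--Brieskorn surface of the form $u^2+v^2+w^2=\text{(polynomial)}$ with exponent pair $(2,2)$, which is non-rigid by \cite{CPW}. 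The LND produced there must be extended across the remaining $r-2$ linear determinantal relations, which is possible precisely because every other monomial $T_i^{l_i}$ carries a degree-$1$ factor $T_{i,j(i)}$ by the hypothesis of (3).

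For the rigidity direction (necessity), assume none of (1)--(3) holds; I would argue by induction on $r$. The base case reduces essentially to the trinomial hypersurface rigidity of \cite{IA, CPW}. In the inductive step, I present $X$ as a suspension by singling out a variable $T_{b,k}$ with $l_{b,k}\geq 2$: eliminating it yields a trinomial variety $X'$ of smaller $r$, and $X$ sits as a two-fold suspension of $X'$ in the sense of Section \ref{Susp}. To apply Lemma 1 it suffices to exhibit two indices $i$ such that the extended variety $X'_i$ over $\mathbb{L}_i=\overline{\mathbb{K}(y_i)}$ is rigid; this is done by verifying that the failure of (2) and (3) at $X$ propagates to failure of the corresponding conditions of Theorem \ref{Type1} or Theorem \ref{Type2} (depending on whether the suspension reduces to Type 1) at $X'_i$, and invoking the inductive hypothesis. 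When $X$ is not rational, Theorem \ref{rational} supplies enough structural constraints on the exponents $\mathfrak{l}_i$ to force the rigidity of the relevant $X'_i$ directly.

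The main obstacle I anticipate is the interaction between case (3) and the suspension reduction. Condition (3) is inherently asymmetric in the index set, distinguishing three privileged indices with two different rigidity-relevant roles, so ensuring that its failure is preserved under scalar extension to $\mathbb{L}_i$ (where $\mathbb{L}_i$ may contain roots that cause previously irreducible factors to split) requires a delicate case analysis. On the sufficiency side, the analogous difficulty is checking that the Pham--Brieskorn LND constructed for the three all-even monomials remains locally nilpotent after propagation through the chain of $r-2$ determinantal equations; this is the direct analogue of the linear gluing in \cite{SG}, but complicated by the quadratic exponents governing the three privileged indices.
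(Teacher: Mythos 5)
Your plan for the non-rigidity direction is workable (the paper does it more economically: induction on $r$, writing $X$ as a suspension over a smaller Type~2 variety that still satisfies (2) or (3), and quoting the hypersurface case from \cite{SG}; no explicit LND or Pham--Brieskorn reduction is needed). The genuine gap is in the rigidity direction. Your induction-by-suspension argument via Lemma~1 can only reduce the problem to the case where every $n_i=1$ and every $l_i\ge 2$: once $n_r=1$ the suspension has a single variable $y_1$, and Lemma~1 requires two distinct indices $i\ne j$, so it gives nothing further. You are then left with trinomial \emph{surfaces} of arbitrary $r$ with all exponents at least $2$, and neither your inductive hypothesis nor Theorem~\ref{rational} resolves them: rationality does not ``force rigidity'' of anything. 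The paper's actual logic is the contrapositive --- it proves a separate lemma that a \emph{non-rigid} Type~2 trinomial surface must be rational (via the structure of the $\mathbb{G}_a\rtimes T$-orbits), so non-rational surfaces are rigid for free; but the rational exceptional exponent vectors, namely $(2,2,2,\text{odd})$, $(2,2,3,4)$, $(2,2,3,5)$ and $(2,2,2,3,5)$, survive this sieve and are precisely the hard core of the theorem.

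Your proposal contains no tool to handle these surviving cases. The paper needs two additional ingredients that are absent from your outline: the $ABC$-theorem for locally nilpotent derivations \cite[Theorem 2.48]{GF}, which kills every surface containing three exponents with $l_{i_1}^{-1}+l_{i_2}^{-1}+l_{i_3}^{-1}\le 1$, and the Finston--Maubach/Freudenburg--Moser-Jauslin criterion \cite[Corollary 3.2]{FM}, which reduces rigidity of $B[z]/(f+z^n)$ to the estimate $|f|_B\ge 2$ on the absolute degree. The latter requires nontrivial case-by-case computations (e.g.\ in case $(2,2,2,\text{odd})$ one must factor $f$ in two ways and use the pairwise linear independence of the columns of $A$ to show a certain coefficient $\gamma_1$ is nonzero). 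Without identifying these cases and these tools, the rigidity direction of your proof does not close.
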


\begin{proof}

As we mentioned before, if $m>0$ then $X$ is not rigid. So further we assume that $m = 0$. 

Firstly we will show that if the condition 2) or 3) holds then $X$ is not rigid. We will use induction by~$r$. When $r = 2$ then $X$ is a trinomial hypersurface and $X$ satisfies condition 1) or 2) respectively from \cite[Theorem 2]{SG}.

Now we will prove the inductive step. Suppose that $r > 2$. For condition 1), we assume that $a = 0, b = 1$, and for condition 2), we assume that $a =0, b = 1, c =2$.

Let $\mathcal{A} = \mathbb{K}[T_{ij}|  0\leq i \leq r-1, 1 \leq j \leq n_i]$ and $Z = \mathrm{Spec}(A/(g_i | i \in I\setminus \{r-2\}) )$.

Then $X = \mathrm{Susp}(Z, f, l_{r1}, \ldots, l_{rn_r})$ for $f = p_1T_{r-2}^{l_{r-2}} + p_2T_{r-1}^{l_{r-1}}$ where
$$p_1 = -\frac{a_{0r-1}a_{1r} - a_{0r}a_{1r-1}}{a_{0r-2}a_{1r-1} - a_{0r-1}a_{1r-2}},\  p_2 = \frac{a_{0r-2}a_{1r} - a_{0r}a_{1r-2}}{a_{0r-2}a_{1r-1} - a_{0r-1}a_{1r-2}}.$$

Then $Z$ is a trinomial variety and $Z$ satisfies condition 2) or 3). So $Z$ is not rigid and by \cite[Lemma 10]{SG} $X$ is also not rigid.

Now we will show that if both conditions 2) and 3) do not hold then $X$ is rigid. If $r = 2$ the variety $X$ is a trinomial hypersurface and by \cite[Theorem 2]{SG} $X$ is rigid. So further we assume that $r > 2$. 

As per the above, $X$ is an $n_r$-suspension over $Z$. If conditions 2) and 3) do not hold for $X$ then they do not hold for trinomial varieties $X_i$ (see Section \ref{Susp}) for all $ 1 \leq i \leq n_r$. By \cite[Lemma 7]{SG}, if all $X_i$ are rigid then $X$ is rigid. So we can assume that $n_r = 1$. Similarly, we can assume that $n_0 = \ldots = n_r = 1$. Therefore, all monomials $T_i$ contain only one variable and $X$ is a surface.

In this case we will write $T_i$ instead of $T_{i1}$ and  $l_i$ instead of $l_{i1}$. If some $l_i$ equals $1$ then $X$ is isomorphic to a trinomial variety with smaller $r$. So we can assume that all $l_i > 1$.

Suppose that there are $i_1 > i_2 > i_3$ such that $l_{i_1}^{-1} + l_{i_2}^{-1} + l_{i_3}^{-1} \leq 1$. We can assume that $i_1 = 0, i_2 = 1, i_3 = 2$. Then we can apply the $ABC$-theorem for locally nilpotent derivations; see \cite[Theorem 2.48]{GF}. Then $T_0, T_1, T_2$ belong to the kernel of any locally nilpotent derivation on $X$. But this implies that all $T_i$ belong to the kernel of any locally nilpotent derivation. 

Therefore, we can assume that conditions of the $ABC$-theorem are not satisfied as well as conditions 2) and 3). Then we have five cases.

\begin{itemize}

\item[Case 1.] $l_i = 2$ for all $i$.

\item[Case 2.] There is  $i$ with $l_i > 2$ and for all $ j \in \{0, \ldots, r\}\setminus \{j\}$  we have $l_j = 2$.

\item[Case 3.] There are $i$ and $j$ with $l_i = l_j = 3$ and for all $k\in \{0, \ldots, r\}\setminus\{i, j\}$ we have $l_k = 2$.

\item[Case 4.] There are $i, j$ with $l_i =3, l_j = 4$ and for all $k \in \{0, \ldots, r\}\setminus \{i, j\}$ we have $l_k = 2$.

\item[Case 5.] There are $i, j$ with $l_i = 3, l_j = 5$ and for all $k \in \{0, \ldots, r\}\setminus \{i, j\}$ we have $l_k = 2.$  

\end{itemize}

\begin{lemma}
If a trinomial surface of Type 2 is not rigid then $X$ is rational.
\end{lemma}

\begin{proof}

Let $X$ be a trinomial surface of Type 2. Then $X$ is given by a system of equations of the following form:

$$\begin{cases}
c_1T_0^{l_0} + c_2T_1^{l_1} + c_3T_2^{l_2} = 0\\
c_4T_1^{l_1} + c_5T_2^{l_2} + c_6T_3^{l_3} = 0\\
\ldots
\end{cases},$$

A torus $T$ of dimension one acts on $X$ by the following rule:
$$t \circ T_i = t^{\frac{\mathrm{gcd}(l_0,\ldots, l_r)}{l_i}}T_i, \ t \in \mathbb{K}^*.$$

Suppose that $X$ is not rigid. Then there is a non-trivial $\mathbb{G}_a$-action on $X$. But then there is a non-trivial $\mathbb{G}_a$-action on $X$ which is normalized by $T$; see \cite[Lemma 1.10]{AL}. So the group $\mathbb{G}_a \rtimes T$ acts on $X$.

The point $O = (0, \ldots, 0) \in X$ is a unique point which is fixed with respect to the action of $T$ on $X$. Moreover, it is a singular point of $X$. Every $T$-orbit on $X$ contains $O$ in its closure. But all $\mathbb{G}_a$-orbits are closed. Let $x$ be a smooth point on $X$ that is not $\mathbb{G}_a$-stable. Then the closures of the $T$-orbit and $\mathbb{G}_a$-orbit of $x$ are distinct irreducible varieties. Therefore, the $\mathbb{G}_a\rtimes T$-orbit of $x$ is open and of dimension two.  We will denote this orbit by $U$.

Suppose that $\mathrm{St}(x) \neq \{e\}$ and consider $g\in \mathrm{St}(x)\!\setminus\!\{e\}$. Since the dimension of $U$ is equal to two, $\mathrm{St}(x)$ is a finite group and $g$ is a semisimple element of finite order. The group $\mathbb{G}_a \rtimes T$ is solvable. Hence, there is a maximal torus $\widetilde{T} \subseteq \mathbb{G}_a \rtimes T$ such that $g\in \widetilde{T}$; see \cite[Theorem 3.2.11]{VO}. But $\widetilde{T}$ is conjugated to $T$ and $T$ acts freely on the smooth points of $X$. Therefore, $\mathrm{St}(x) = \{e\}$.

It follows that $U$ is an open orbit in $X$ that is isomorphic to $\mathbb{G}_a\times T$ as a variety. So $X$ is rational.

\end{proof}

Therefore, if $X$ is not rational then $X$ is rigid. Using Theorem \ref{rational} we come to a conclusion that the following cases are only left to be checked.

\begin{itemize}

\item[a)] $r = 3, l_0 = l_1 = l_2 = 2$ and $l_3$ is an arbitrary odd number.

\item[b)] $r = 3, (l_0, l_1, l_2, l_3) = (2, 2, 3, 4).$  

\item[c)] $r = 3, (l_0, l_1, l_2, l_3) = (2, 2, 3, 5).$

\item[d)] $r = 4, (l_0, l_1, l_2, l_3, l_4) = (2, 2, 2, 3, 5).$

\end{itemize}

Let $B$ be a commutative $\mathbb{K}$-domain and $f\in B$. If $B$ is not rigid we define the \emph{absolute degree} of $f$ as
$$|f|_B = \mathrm{min}\{\mathrm{deg}_\partial(f) \mid \partial\in  \mathrm{LND}(B), \partial \neq 0\},$$
where
$$\mathrm{deg}_{\partial}(f) = \mathrm{min}\{n \in \mathbb{Z}_{\geq 0} \mid \partial^{n+1}(f) = 0\}.$$

If $B$ is rigid, we define $|f|_B = -\infty$ if $f=0$ and $|f|_B = \infty$ otherwise. We will use the following proposition.

\begin{proposition}\cite[Corollary 3.2]{FM}
Suppose $B$ is a $\mathbb{Z}$-graded affine $\mathbb{K}$-domain, $f\in B$ is homogeneous ($f\neq 0$), $\mathrm{deg}\ f \neq 0,$ and $n\geq 	2$ is an integer coprime to $\mathrm{deg}\ f$. Assume that $B' = B[z]/(f+z^n)$ is a domain. The following are equivalent. 
\begin{enumerate}
\item $|f|_{B} \geq 2$.

\item $B'$ is rigid.
\end{enumerate}
\end{proposition}

\textbf{Case a).} In this case, the variety $X$ is given by the following system of equations:

$$\begin{cases}
c_1T_0^2 + c_2T_1^2 + c_3T_2^2 = 0\\
c_4T_1^2 + c_5T_2^2 + c_6T_3^n = 0
\end{cases},$$
where $n$ is an odd number and $c_i \neq 0$ for all $i$.

We denote by $\widetilde{R}$ the algebra $\mathbb{K}[T_0, T_1, T_2]/(c_1T_0^2+ c_2T_1^2 + c_3T_2^2)$. We will apply \cite[Corollary 3.2]{FM} to the algebra $\widetilde{R}$ and $f = \frac{c_4}{c_6}T_1^2 + \frac{c_5}{c_6}T_2^2$. There is a grading on $\widetilde{R}$ such that $\mathrm{deg}\ T_0 = \mathrm{deg}\ T_1 = \mathrm{deg}\ T_2 = 1$. Then $f$ is homogenous and $\mathrm{deg}\ f = 2$. 
Therefore, if we show that $|f|_{\widetilde{R}} \geq 2$ then $X$ will be rigid.

Suppose that there is a non-zero $\partial \in \mathrm{LND}(\widetilde{R})$ such that $\partial(f) = 0$. Then by \cite[Theorem 2.50]{GF} we have $\partial(T_1) = \partial(T_2) = 0$. But it implies that $\partial(T_0) = 0$ and $\partial = 0$. We obtain a contradiction. So $|f|_{\widetilde{R}} \geq 1.$

Now suppose that there is a non-zero $\partial \in \mathrm{LND}(\widetilde{R})$ with $\partial^2(f) = 0$. We have
$$f = \frac{c_4}{c_6}T_1^2 + \frac{c_5}{c_6}T_2^2 = \left( \beta_1 T_1 + \beta_2T_2\right)\left(\beta_1 T_1 - \beta_2 T_2\right)$$
where $\beta_1^2 = \frac{c_4}{c_6}$ and $\beta_2^2 = -\frac{c_5}{c_6}$. Since $\mathrm{deg}_{\partial}\ f = 1$ then $\mathrm{deg}_\partial(\beta_1 T_1 + \beta_2T_2) = 0$ or $\mathrm{deg}_\partial(\beta_1 T_1 - \beta_2T_2) = 0.$ We can assume that $\partial(\beta_1 T_1 + \beta_2T_2) = 0$. This implies that $\partial(T_2) = -\frac{\beta_1}{\beta_2}\partial(T_1)$.

On the other hand
$$f = \frac{c_4}{c_6}T_1^2 + \frac{c_5}{c_6}T_2^2 =  \frac{c_4}{c_6}T_1^2 +  \frac{c_5}{c_6}\left(-\frac{c_1}{c_3}T_0^2  - \frac{c_2}{c_3}T_1^2\right) = \left(\gamma_0T_0 + \gamma_1 T_1\right)(\gamma_0 T_0 - \gamma_1T_1) $$ 
where $\gamma_0^2 = -\frac{c_5}{c_6}\frac{c_1}{c_3}, \gamma_1^2 = -\frac{c_4}{c_6} + \frac{c_5}{c_6}\frac{c_2}{c_3}.$ Again we can assume that $\partial\left(\gamma_0T_0 + \gamma_1 T_1\right) = 0$. So $\partial(T_0) = -\frac{\gamma_1}{\gamma_0}\partial(T_1).$

Since $c_1T_0^2 + c_2T_1^2 + c_3T_2^2 = 0$ in $\widetilde{R}$ we have 
$$ 0 = \partial(c_1T_0^2 + c_2T_1^2 + c_3T_2^2) = 2c_1T_0\partial(T_0) + 2c_2T_1\partial(T_1) + 2c_3T_2\partial(T_2) = $$
\begin{equation}\label{eq1}
2(-c_1\frac{\gamma_1}{\gamma_0}T_0 + c_2T_1 -c_3\frac{\beta_1}{\beta_0}T_2)\partial(T_1).
\end{equation}
The elements $T_0, T_1, T_2$ are linearly independent in $\widetilde{R}$ and since $c_2 \neq 0 $ we have 
$$2(-c_1\frac{\gamma_1}{\gamma_0}T_0 + c_2T_1 -c_3\frac{\beta_1}{\beta_0}T_2) \neq 0.$$
Then equation \ref{eq1} implies $\partial(T_1) = 0$. Since $\beta_1 \neq 0$ we have $\partial(T_2) = 0$. Finally, 
$$\gamma_1^2 = -\frac{c_4}{c_6} + \frac{c_5}{c_6}\frac{c_2}{c_3} = \frac{1}{c_6c_3}\left( -c_4c_3 + c_5c_2\right) =$$

$$ \frac{1}{c_6c_3}\left( -(a_{02}a_{13} - a_{03}a_{12})(a_{00}a_{11} - a_{01}a_{10}) - (a_{01}a_{13} - a_{03}a_{11})(-a_{00}a_{12} + a_{02}a_{10}\right) = $$
$$\frac{1}{c_6c_3}\left(-a_{00}a_{02}a_{11}a_{13} - a_{01}a_{03}a_{10}a_{12}  + a_{00}a_{01}a_{12}a_{13} + a_{02}a_{03}a_{10}a_{11}\right) =$$
$$\frac{1}{c_6c_3}\left(a_{00}a_{13}(a_{01}a_{12} - a_{02}a_{11}) - a_{03}a_{10}(a_{01}a_{12} - a_{02}a_{11})\right) = $$
$$\frac{1}{c_6c_3}\left(a_{00}a_{13} - a_{03}a_{10}\right)\left(a_{01}a_{12} - a_{02}a_{11}\right).$$
Since the matrix $A$ from Definition \ref{defTrin} has pairwise linearly independent columns we obtain $\gamma_1 \neq 0 $. Since $\partial(T_0) = -\frac{\gamma_1}{\gamma_0}\partial(T_1) = 0$ we obtain that $\partial = 0$. So $|f|_{\widetilde{R}} \geq 2$. Then by \cite[Corollary 3.2]{FM} $X$ is rigid.

\textbf{Case b).}

In this case $X$ is given by the following system of equations:

$$\begin{cases}
c_1T_0^2 + c_2T_1^2 + c_3T_2^4 = 0\\
c_4T_1^2 + c_5T_2^4 + c_6T_3^3 = 0
\end{cases},$$
where $c_i \neq 0$ for all $i$. 

Again, we will use \cite[Corollary 3.2]{FM}. We take $\widetilde{R} = \mathbb{K}[T_0, T_1, T_2]/(c_1T_0^2 + c_2T_1^2 + c_3T_2^4)$ and $f = \frac{c_4}{c_6}T_1^2 + \frac{c_5}{c_6}T_2^4.$ We consider the grading on $\widetilde{R}$ with $\mathrm{deg}\ T_0 = \mathrm{deg}\ T_1 = 2$ and $\mathrm{deg}\ T_2 = 1$.  Therefore, $\mathrm{deg}\ f = 4$, which is coprime to $3$. 

Suppose that there is a locally nilpotent derivation $\partial$ on $\widetilde{R}$ such that $\partial(f) = 0$ or $\partial^2(f) = 0$. Then, by \cite[Theorem 2.50]{GF}, $\partial(T_1) = \partial(T_2) = 0$, which implies $\partial(T_0) = 0. $ So $\partial = 0$. Hence, $|f|_{\widetilde{R}} \geq 2$ and by \cite[Corollary 3.2]{FM} $X$ is rigid.

\textbf{Case c).} 

In this case we can assume that $X$ is given by the following system of equations:

$$\begin{cases}
c_1T_0^2 + c_2T_1^2 + c_3T_2^3 = 0\\
c_4T_1^2 + c_5T_2^3 + c_6T_3^5 = 0
\end{cases},$$
where $c_i \neq 0$ for all $i$. 

We consider $\widetilde{R} = \mathbb{K}[T_0, T_1, T_2]/(c_1T_0^2 + c_3T_1^2 + c_3T_2^3)$,\ $f = \frac{c_4}{c_6}T_1^2 + \frac{c_5}{c_6}T_2$, and the grading on $\widetilde{R}$ with $\mathrm{deg}\ T_0 = \mathrm{deg}\ T_1 = 3,\ \mathrm{deg}\ T_2 = 2.$ Then $\mathrm{deg}\ f = 6$, which is coprime to $5$. 

If there is a locally nilpotent derivation on $\widetilde{R}$ with $\partial(f) = 0$ or $\partial^2(f) = 0$ then by \cite[Theorem 2.50]{GF} we have $\partial(T_1) = \partial(T_2) = 0$, which implies that $\partial(T_0) = 0$ and $\partial = 0$. So $X$ is rigid by \cite[Corollary 3.2]{FM}.

\textbf{Case d).}

In this case we can assume that $X$ is given by the following system of equations: 

$$\begin{cases}
c_1T_0^2 + c_2T_1^2 + c_3T_2^2 = 0\\
c_4T_1^2 + c_5T_2^2 + c_6T_3^3 = 0\\
c_7T_2^2 + c_8T_3^3 + c_9T_4^5 = 0.
\end{cases},$$
where $c_i \neq 0$ for all $i$. 

We consider the ring $\widetilde{R} = \mathbb{K}[T_0, T_1, T_2, T_3]/(c_1T_0^2 + c_2T_1^2 + c_3T_2^2, c_4T_1^2 + c_5T_2^2 + c_6T_3^3 )$, the element $f = \frac{c_7}{c_9}T_2^2 + \frac{c_8}{c_9}T_3^3$, and the grading with $\mathrm{deg}\ T_0 = \mathrm{deg}\ T_1 = \mathrm{deg}\ T_2 = 3,\ \mathrm{deg}\ T_3 = 2$. Then $f$ is homogeneous and $\mathrm{deg}\ f= 6$, which is coprime to $5$. 

By case a) the ring $\widetilde{R}$ is rigid. So $|f|_{\widetilde{R}} = \infty > 2$ and $X$ is rigid by   \cite[Corollary 3.2]{FM}.

 \end{proof}

Combining Theorems \ref{Type1} and \ref{Type2} we obtain the main theorem.

\begin{theorem}\label{MainTheorem}

Let $X$ be a trinomial variety of Type 1. Then $X$ is not rigid if and only if one of the following holds:
\begin{enumerate}
\item $m > 0$;
\item There is $b \in \{1,\ldots, r\}$ such that for each $ i\in \{1, \ldots, r\} \setminus \{b\}$ there is $j(i) \in \{1, \ldots, n_i\}$ with $l_{ij(i)} = 1$.

\end{enumerate}

Let $X$ be a trinomial variety of Type 2. Then $X$ is not rigid if and only if one of the following holds:
\begin{enumerate}

\item $m > 0$;

\item There are at most two numbers $a, b \in \{0,\ldots, r\}$ such that for each $i \in \{0,\ldots, r\} \setminus \{a, b\}$ there is $j(i) \in \{1, \ldots, n_i\}$ with $l_{ij(i)} = 1$. 

\item There are exactly three numbers $a,b,c \in \{0, \ldots, r\}$ such that for each $i \in \{a, b\}$ there is $j(i) \in \{1, \ldots, n_i\}$ with $l_{ij(i)} = 2$ and the numbers $l_{ik}$ are even for all $k \in \{1,\ldots, n_i\}$. Moreover, for each $i \in \{0, \ldots, r\}\setminus\{a,b,c\}$ there is $j(i) \in \{1, \ldots, n_i\}$ with $l_{ij(i)} = 1$.
\end{enumerate}

\end{theorem}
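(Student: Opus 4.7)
The plan is essentially to observe that Theorem \ref{MainTheorem} is the direct concatenation of Theorem \ref{Type1} and Theorem \ref{Type2}. By Definition \ref{defTrin}, the defining data of a trinomial variety includes the parameter $q \in \{0,1\}$, and the two values correspond exactly to Type 1 and Type 2 respectively; these cases are mutually exclusive and exhaustive. Thus the natural approach is to split on the type and cite the matching criterion.

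For a trinomial variety $X$ of Type 1, the criterion for non-rigidity is exactly Theorem \ref{Type1} (due to \cite{SG}), and the statement reproduced in the first half of Theorem \ref{MainTheorem} is verbatim that of Theorem \ref{Type1}. For a trinomial variety $X$ of Type 2, the criterion for non-rigidity is Theorem \ref{Type2}, whose proof has just been completed in this section, and the second half of Theorem \ref{MainTheorem} is verbatim that of Theorem \ref{Type2}. Consequently the proof is a one-line invocation of the two previous theorems.

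There is no real obstacle in this step, since all the substantive work has already been carried out in proving Theorem \ref{Type2}; the combined statement in Theorem \ref{MainTheorem} is simply the unified formulation of the full classification of rigid trinomial varieties, with the Type 1 half being the previously known result from \cite{SG} and the Type 2 half being the new contribution of this paper.
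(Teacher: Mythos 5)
Your proposal matches the paper exactly: the paper derives Theorem \ref{MainTheorem} by simply combining Theorem \ref{Type1} (the Type 1 criterion from \cite{SG}) with the newly proved Theorem \ref{Type2}, precisely the one-line concatenation you describe. No issues.
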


\end{document}